\newtheorem{theorem}{Theorem}[section]
\newtheorem{proposition}[theorem]{Proposition}
\newtheorem{corollary}[theorem]{Corollary}
\theoremstyle{definition}
\theoremstyle{remark}
\numberwithin{equation}{section}
\newcommand{\A}{\mathcal A}
\numberwithin{equation}{section}
\begin{document}
\title [Orthogonally additive polynomials on the bidual of   Banach algebras]{Orthogonally additive polynomials on the bidual of  	Banach algebras}
\author[A.A. Khosravi]{AminALLah Khosravi}
\author[H.R. Ebrahimi Vishki]{Hamid Reza Ebrahimi Vishki}
\author[R. Faal]{Ramin  Faal}
\address{ Department of Pure Mathematics, Ferdowsi University of Mashhad, P.O. Box 1159, Mashhad 91775, IRAN.}
\email{amin.khosravi@mail.um.ac.ir}
\address{ Department of Pure Mathematics and  Center of Excellence in Analysis on Algebraic Structures (CEAAS), Ferdowsi University of Mashhad, P.O. Box 1159, Mashhad 91775, IRAN.}
\email{vishki@um.ac.ir}
\address{ Department of Pure Mathematics, Ferdowsi University of Mashhad, P.O. Box 1159, Mashhad 91775, IRAN.}
\email{faal.ramin@yahoo.com}

\subjclass[2010]{Primary: 46G25; Secondary:  46B20; 46B10}
\keywords{{Orthogonally additive homogeneous polynomial; Arens product; Aron-Berner extension; dual Banach algebra}}
\maketitle
\begin{abstract}
 			We say that a Banach algebra $\A$ has $k-$orthogonally additive property ($k-$OA property, for short) if every orthogonally additive $k-$homogeneous polynomial $P:\A\longrightarrow\mathbb C$ can be expressed  in the standard form $P(x)=\langle\gamma, x^k\rangle\ (x\in\A)$, for some $\gamma\in\A^*.$  In this paper we  first investigate the extensions of  a  $k-$homogeneous polynomial from $\A$ to the  bidual $\A^{**},$ equipped with the first  Arens product.  We then study the relationship between  $k-$OA properties of $\A$ and $\A^{**}.$  This relation  is specifically investigated for a dual Banach algebra. Finally we examine our  results for the dual Banach algebra $\ell^1$, with pointwise product,  and  we show that the Banach algebra  $(\ell^1)^{**}$ enjoys $k-$OA property.
\end{abstract}
\section{introduction}
 Let $\A$ be a Banach algebra and let $k\geq 2$ be an integer. Hereafter,  $\A^k$ stands for  the $k-$fold product of $A$. By a $k-$homogeneous polynomial on $\A$ we mean a  map $P:\A\longrightarrow\mathbb C$ for which there exists a  bounded symmetric  $k-$linear form  $\Phi: \A^k\longrightarrow\mathbb  C$
satisfying $P(x) = \Phi(x,\cdots,x)$, for every $x\in\A$. Such a map $\Phi$ is unique  and as a consequence of the polarization formula can be obtained through
\begin{eqnarray}\label{Phi}
\Phi(x_1,\ldots,x_k)=\frac{1}{k!2^k}\sum_{\epsilon_1,\cdots,\epsilon_k=\pm1}\epsilon_1\cdots\epsilon_n P(\epsilon_1 x_1+\cdots+\epsilon_k x_k)\quad (x_1,\ldots,x_k\in\A^k).
\end{eqnarray}
We say that a $k-$homogeneous polynomial $P$ is orthogonally additive  if $P(x+y)=P(x)+P(y)$ whenever $x,y$ are orthogonal, that is, $xy=0=yx, (x,y\in\A)$. It is easy to verify that every functional   $\gamma$ in $\A^*$  induces an $k-$homogeneous polynomial $P_\gamma$ defined by $ { P}_\gamma(x)=\langle\gamma, x^k\rangle$ on $\A,$ which is orthogonally additive. The $k-$homogeneous polynomials of this type are called standard. In this case the unique  symmetric $k-$linear form  associated with $P_\gamma$  (see \eqref {Phi}) is:
\begin{eqnarray}\label{standard}
 \Phi(x_1,\ldots,x_k)=\frac{1}{k!}\sum_{\sigma\in S_k}\langle\gamma, x_{\sigma(1)}\ldots x_{\sigma(k)}\rangle\quad (x_1,\ldots,x_k\in\A^k),
 \end{eqnarray}
where $S_k$ denotes the permutation group of  the set $\{1,\ldots,k\}.$\\

%
  The main question,  however, is whether every  orthogonally additive $k-$homogeneous polynomial on a Banach  algebra   can be expressed  in the standard form. We say that a Banach algebra $\A$ has   $k-$orthogonally additive  property ($k-$OA property, for short), if  every  orthogonally additive $k-$homogeneous polynomial on   $\A$ is in the standard form. $k-$OA property of a wide variety of Banach algebras were investigated by several authors. For the algebra of continuous functions  and Banach
lattices it has been independently studied in \cite{PV} and  \cite{BLL}, respectively.  In \cite{PPV} the question was treated for  a general C$^*$-algebra. It was studied  for the Fourier algebras and the convolution group algebras   on a locally compact group in \cite{AEV1} and  \cite{AEGV}, respectively. For the Banach algebras satisfying the properties $\mathbb A$ and $\mathbb B$ it has been investigated in \cite{ABSV}.  In \cite{V} the question was positively answered for a certain class of Banach function algebras which also promoted  some older results in this direction.

It seems natural to ask whether the $k-$OA property is preserved by extending the homogeneous polynomials to the   bidual spaces. Our main aim here is investigating the relation between the $k-$OA property of a Banach algebra $\A$ with the $k-$OA property of the bidual $\A^{**}.$ For this purpose, we first study the possibility of transferring the $k-$OA property by extending the orthogonally additive $k-$homogeneous polynomials to $\A^{**}$ for a general Banach algebra $\A$. We then restrict ourselves to  the dual Banach algebras and we show that for  a dual Banach algebra $\A$,  the $k-$OA property inherits from $\A^{**}$ to $\A$ itself (Theorem \ref{inherits}). To investigate the converse direction, in Theorem \ref{aks} we  show that this property can be transferred from $\A$ to $\A^{**}$ for  certain class of dual Banach algebras and we  apply  it for the dual Banach algebra $\ell^1$ in Corollary \ref{l}.
\section{Extension of homogeneous polynomials to the bidual spaces}
Extending a  bounded bilinear map on Bnaach spaces  to the bidual spaces was initiated by the pioneer work of Arens \cite{ Ar}, where he  also originated  two, generally different, (Arens) products $\Box$ and $\lozenge$ to the bidual $\A^{**}$ of a Banach algebra $\A.$ A Banach algebra $A$ is called Arens regular when the products $\Box$ and $\lozenge$ coincide on the whole of $\A^{**}$. More information  on the properties of Arens products and related materials  can be found in \cite{D}.  In the realm of multilinear maps, the  most influential  method for extending a  multilinear map to the bidual spaces  is due to Aron and Berner \cite{AB}, who showed that such extensions always exist (see also \cite{DG}, where  the isometric nature of the Aron--Berner extensions is also established).

Before proceeding,  we need to clarify what we mean by extending a homogeneous polynomial to the bidual. Let us recall that every    bounded (symmetric) $k-$linear  form $\Phi:\A^k\longrightarrow\mathbb C$ has $k!$ many, generally different,  Aron--Berner extensions to the bidual spaces.   Indeed, corresponding to each permutation  in $S_k$ there is   an Aron--Berner extension of $\Phi$ to the biduals, however, here and subsequently,  we only deal with the (first) Aron-Berner extension $\overline\Phi:(\A^{**})^k\longrightarrow\mathbb C,$  corresponds to the identity permutation, which is uniquely defined by
\begin{align}\label{ar}
\overline{\Phi}(m_1,\ldots,m_k)=\text{weak}^*\text{-}\lim_{\alpha_1}\cdots\text{weak}^*\text{-}\lim_{\alpha_k}\Phi(a^1_{\alpha_1},\cdots,a^k_{\alpha_k}),
\end{align}
 where $(a^i_{\alpha_i})$ is a net in $\mathcal{A}$,  weak$^*$-converging to $m_i\in\mathcal{A}^{**}$, for each $1\leq i\leq k.$
Moreover, $\overline{\Phi}$ is a bounded $k-$linear form with the same norm as $\Phi$. It should be remarked that $\overline\Phi$  is not symmetric, in general,  (see \cite{KEP}, in which  the authors provided some examples  illustrating this discrepancy for  the case of triple maps and they also investigated some conditions under which certain extensions are symmetric). Although, similar to \eqref{standard}, $\overline \Phi$ can be symmetrized in a natural way by  ${\overline \Phi}^s$ which has the presentation 
\begin{align}\label{standard3}
{\overline \Phi}^s(m_1,\ldots,m_k)=\frac{1}{k!}\sum_{\sigma\in S_k}\overline{\Phi}\left(m_{\sigma(1)},\ldots,m_{\sigma(k)}\right)\quad (m_1,\ldots,m_k\in\A^{**}).
\end{align}
 We denote the $k-$homogeneous polynomial induced by ${\overline \Phi}^s$ (or $\overline\Phi$ ) by $\overline P$, that is, $\overline P(m)={\overline \Phi}^s(m,\ldots,m)=\overline\Phi(m,\ldots,m),$ for each $m\in\A^{**}.$ It naturally extends the $k-$homogeneous polynomial $P:\A\longrightarrow\mathbb C$ induced by $\Phi$. We usually call $\overline P$ as the Aron-Berner extension of $P.$\medskip

It is worthwhile mentioning that the $k-$homogeneous polynomial $\overline P$ does not reflect the  orthogonal additivity of $P$, in general. To clarify this,
 		let  $\gamma\in\A^*$ and let $P_\gamma:\A\rightarrow \mathbb{C}$,  $P_\gamma(x)=\langle\gamma, x^2\rangle, (x\in\A)$, be the standard  $2-$homogeneous polynomial on $\A$. Let $\Phi:\A^2\rightarrow \mathbb{C}$  be the unique symmetric bilinear form associated with $P_\gamma$, which by  \eqref{standard} is given by the role $\Phi(x,y)=\frac{1}{2}\langle\gamma, xy+yx\rangle$ for every $x,y\in\A$. Extending $\Phi$ to the biduals,  we arrive at  \[\overline{\Phi}(m,n)=\frac{1}{2}\langle m\square n+n\lozenge m, \gamma\rangle,\] for every $m,n\in\A^{**}.$ Then it induces the  $2-$homogeneous polynomial $\overline{P_\gamma}(m)=\overline{\Phi}(m,m)=\overline{P_\gamma}(m)=\frac{1}{2}\langle m\square m+m\lozenge m, \gamma\rangle$. Now if  $m\square n=0=n\square m$, then \[\overline{P_\gamma}(m+n)=\overline{P_\gamma}(m)+\overline{P_\gamma}(n)+\frac{1}{2}\langle m\Diamond n+n\Diamond m, \gamma\rangle. \]
 	The latter identity  shows that $\overline{P_\gamma}$ is not orthogonally additive, in general. However, it is orthogonally additive in the case where $\A$ is either commutative (note that we get $n\lozenge m=m\Box n$ when $\A$ is commutative) or Arens regular. In these rich cases we further have that $\overline{P_\gamma}(m)=\langle m^2, \gamma\rangle,$ for each $m\in\A^{**}$, which confirms that $\overline{P_\gamma}$  is also standard.
 	
 	These observations lead us to   the following result that seems to be interesting in its own right. Hereafter, we consider  $\A^{**}$ as a Banach algebra equipped with its first Arens product $\Box$. 
 	\begin{proposition}\label{aa}
	Let $\mathcal{A}$ be a Banach algebra which is either commutative or  Arens regular. If $P_\gamma:\mathcal{A}\rightarrow\mathbb C$ is a standard $k$-homogeneous  polynomial ($\gamma\in\A^*$), then so is its (Aron-Berner)  extension $\overline{P_\gamma}:\mathcal{A}^{**}\rightarrow \mathbb C$ which is also induced by the same $\gamma$. 
\end{proposition}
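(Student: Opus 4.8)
The plan is to work with the unique symmetric $k$-linear form and to read off its Aron--Berner extension on the diagonal. Writing $\Phi_\sigma(x_1,\ldots,x_k)=\langle\gamma, x_{\sigma(1)}\cdots x_{\sigma(k)}\rangle$, formula \eqref{standard} becomes $\Phi=\frac{1}{k!}\sum_{\sigma\in S_k}\Phi_\sigma$. Since a \emph{finite} iterated weak$^*$ limit of a sum splits into the sum of the iterated limits, and each bounded $\Phi_\sigma$ possesses an Aron--Berner extension, this gives $\overline\Phi=\frac{1}{k!}\sum_{\sigma\in S_k}\overline{\Phi_\sigma}$, hence $\overline{P_\gamma}(m)=\frac{1}{k!}\sum_{\sigma}\overline{\Phi_\sigma}(m,\ldots,m)$. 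It therefore suffices to show that each diagonal term equals $\langle m^k,\gamma\rangle$, where $m^k$ denotes the $k$-th power of $m$ in $(\A^{**},\Box)$.

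The first ingredient, valid for \emph{every} Banach algebra, is the identity
$$\overline{\Phi_{\mathrm{id}}}(m_1,\ldots,m_k)=\langle m_1\Box\cdots\Box m_k,\gamma\rangle\qquad(m_1,\ldots,m_k\in\A^{**}),$$
identifying the canonical first Aron--Berner extension of the ``identity monomial'' with the iterated first Arens product. I would prove this by induction on $k$, the case $k=2$ being exactly the iterated-limit description of $\Box$ recalled before the proposition. For the inductive step I would evaluate the innermost limit $\mathrm{weak}^*\text{-}\lim_{\alpha_k}$ first: freezing the outer factors into a single element $u=a^1_{\alpha_1}\cdots a^{k-1}_{\alpha_{k-1}}\in\A$ and using the natural $\A$-module action on $\A^*$, that limit collapses the expression to $\langle\gamma',\,a^1_{\alpha_1}\cdots a^{k-1}_{\alpha_{k-1}}\rangle$, where $\gamma'\in\A^*$ is the functional determined by $m_k$ and $\gamma$ through the recursion defining $\Box$; the induction hypothesis then yields $\langle m_1\Box\cdots\Box m_{k-1},\gamma'\rangle=\langle m_1\Box\cdots\Box m_k,\gamma\rangle$. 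If $\A$ is commutative, then $a^{\sigma(1)}_{\alpha_{\sigma(1)}}\cdots a^{\sigma(k)}_{\alpha_{\sigma(k)}}=a^1_{\alpha_1}\cdots a^k_{\alpha_k}$ already inside $\A$, so $\Phi_\sigma=\Phi_{\mathrm{id}}$ on $\A^k$ for every $\sigma$; hence $\overline\Phi=\overline{\Phi_{\mathrm{id}}}$ and the displayed identity gives $\overline{P_\gamma}(m)=\langle m^k,\gamma\rangle$ at once.

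The Arens regular case needs a genuinely different mechanism. The tempting idea of permuting the iterated limits of $\overline{\Phi_\sigma}$ into the canonical order stalls, because once some factors have already been extended they may sit \emph{between} the two factors whose limits one wants to interchange, leaving a bilinear form that is no longer of the product type $\langle f,xy\rangle$; so the bare double-limit criterion does not apply. The clean route is to exploit that Arens regularity, i.e.\ $\Box=\lozenge$, promotes $\Box$ to be \emph{separately} weak$^*$-continuous in \emph{both} variables on $\A^{**}\times\A^{**}$: the left slot is weak$^*$-continuous because $\Box$ is the first Arens product, and the right slot is weak$^*$-continuous because $\Box=\lozenge$ is simultaneously the second. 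Consequently the extended monomial $m_{\sigma(1)}\Box\cdots\Box m_{\sigma(k)}$ can be recovered by replacing each $m_{\sigma(j)}$ with a weak$^*$-approximating net and pulling the limits out one at a time \emph{in any order}, every step being licensed by separate continuity in the relevant slot. Extracting the limits in the order $\alpha_1,\ldots,\alpha_k$ and using that $\Box$ restricts to the original product on $\A$, one obtains
$$\langle m_{\sigma(1)}\Box\cdots\Box m_{\sigma(k)},\gamma\rangle=\lim_{\alpha_1}\cdots\lim_{\alpha_k}\big\langle\gamma,\,a^{\sigma(1)}_{\alpha_{\sigma(1)}}\cdots a^{\sigma(k)}_{\alpha_{\sigma(k)}}\big\rangle=\overline{\Phi_\sigma}(m_1,\ldots,m_k),$$
so on the diagonal $\overline{\Phi_\sigma}(m,\ldots,m)=\langle m^k,\gamma\rangle$ for every $\sigma$. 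Averaging over $S_k$ then gives $\overline{P_\gamma}(m)=\langle m^k,\gamma\rangle$, which is precisely the standard $k$-homogeneous polynomial on $\A^{**}$ induced by the same $\gamma$.

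I expect the decisive point to be the Arens regular case: the obstruction to a naive limit-interchange argument is real, and the key insight is that Arens regularity upgrades $\Box$ to a separately weak$^*$-continuous product, after which reading the extended monomial off as an iterated limit in the prescribed order is routine.
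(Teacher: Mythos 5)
Your proposal is correct and follows essentially the same route as the paper: the paper's proof consists precisely of asserting your two key identities --- $\overline{\Phi}(m_1,\ldots,m_k)=\frac{1}{k!}\sum_{\sigma\in S_k}\langle m_{\sigma(1)}\Box\cdots\Box m_{\sigma(k)},\gamma\rangle$ in the Arens regular case and $\overline{\Phi}(m_1,\ldots,m_k)=\langle m_1\Box\cdots\Box m_k,\gamma\rangle$ in the commutative case --- via ``a routine iterated limit procedure,'' and then evaluating on the diagonal. You have simply supplied the details the paper leaves implicit (the monomial decomposition, the induction identifying $\overline{\Phi_{\mathrm{id}}}$ with the iterated $\Box$-product, and the separate weak$^*$-continuity of $\Box$ under Arens regularity), all of which are sound.
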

\begin{proof}
 Let $\Phi:\mathcal{A}^k\rightarrow\mathbb C$ be the unique  symmetric $k-$linear form associated with $P_\gamma$ as presented in \eqref{standard}. In the case where $\A$ is Aren regular, a routine iterated limit procedure shows that  the Aron-Berner extension $\overline{\Phi}:(\A^{**})^k\longrightarrow\mathbb C$ of $\Phi$ has the  presentation
\begin{eqnarray}\label{standard1}
 \overline{\Phi}(m_1,\ldots,m_k)=\frac{1}{k!}\sum_{\sigma\in S_k}\langle m_{\sigma(1)}\square\ldots\square m_{\sigma(k)},\gamma\rangle,\quad (m_1,\ldots,m_k\in\A^{**}).
 \end{eqnarray}
While in the case that  $\A$ is commutative   $\overline\Phi$ enjoys the expression
 \begin{eqnarray}\label{standard2}
 \overline{\Phi}(m_1,\ldots,m_k)=\langle m_1\square\ldots\square m_k,\gamma\rangle,\quad (m_1,\ldots,m_k\in\A^{**}).
 \end{eqnarray}
Therefore,  by \eqref{standard1} and \eqref{standard2}, in the both  cases  the (Aron-Berner) extension $\overline{P_\gamma}$   has the presentation $\overline {P_\gamma}(m)= \overline{\Phi}(m,\ldots,m)=\langle m^k,\gamma\rangle,$ for each $m\in\A^{**}$. In particular, $\overline{P_\gamma}$ is standard.
 \end{proof}

From the discussion before Proposition \ref{aa}, we can see that for a standard $2$-homogeneous polynomial $P$, the  extension $\overline P$ is orthogonally additive whenever $m\square n=n\square m=0$ implies that $m\Diamond n+n\Diamond m=0$. This condition is evident   when $\mathcal{A}$ is either commutative or Arens-regular and Proposition \ref{aa} in addition confirms that $\overline P$ is standard in each case. The next result studies a Banach algebra $\A$ which is neither commutative nor Arens regular whose bidual $\A^{**}$ does not have $2-$OA property.  However, to the best of our knowledge, we still do not know a Banach algebra $\A$ satisfying the conditions of Proposition \ref{ns}, (see the Question at the end of the paper).
\begin{proposition}\label{ns}
Let $\mathcal{A}$ be a Banach algebra with a bounded approximate identity  such that  $m_0\square m_0\neq m_0\Diamond m_0$ for some  $m_0\in \mathcal{A}^{**}$. If for each $m,n\in \mathcal{A}^{**}$ the equality  $m\square n=n\square m=0$ implies $m\Diamond n+n\Diamond m=0,$ then there exists a standard $2-$homogeneous polynomial $P_\gamma$ ($\gamma\in\A^*$) on $\mathcal{A}$ such that  $\overline{P_\gamma}$  is orthogonally additive on $\A^{**}$ but is  not standard. In particular,  $\mathcal{A}^{**}$ has not $2-$OA property.
\end{proposition}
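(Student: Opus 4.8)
The plan is to separate the two assertions cleanly, since the orthogonal additivity of $\overline{P_\gamma}$ turns out to be automatic under the standing hypothesis, for \emph{every} $\gamma$. Indeed, the computation carried out just before the proposition shows that whenever $m\square n=n\square m=0$ one has $\overline{P_\gamma}(m+n)=\overline{P_\gamma}(m)+\overline{P_\gamma}(n)+\tfrac12\langle m\lozenge n+n\lozenge m,\gamma\rangle$, and the hypothesis forces $m\lozenge n+n\lozenge m=0$, so the defect term disappears and $\overline{P_\gamma}$ is orthogonally additive no matter which $\gamma$ is taken. The whole content of the statement therefore lies in exhibiting a single $\gamma\in\A^*$ for which $\overline{P_\gamma}$ fails to be standard on $\A^{**}$.

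To choose $\gamma$, I would use that $\A^*$ separates the points of $\A^{**}$: since $m_0\square m_0\neq m_0\lozenge m_0$ (in particular $\A$ is not Arens regular), pick $\gamma\in\A^*$ with $\langle m_0\square m_0-m_0\lozenge m_0,\gamma\rangle\neq0$. Recalling $\overline{P_\gamma}(m)=\tfrac12\langle m\square m+m\lozenge m,\gamma\rangle$, I would argue by contradiction: assume $\overline{P_\gamma}$ is standard on $\A^{**}$, say $\overline{P_\gamma}(m)=\langle\Gamma',m\square m\rangle$ for some $\Gamma'\in(\A^{**})^*=\A^{***}$. Setting $\Lambda:=2\Gamma'-\widehat\gamma$, where $\widehat\gamma\in\A^{***}$ is the canonical image of $\gamma$, this assumption is equivalent to the factorization $\langle m\lozenge m,\gamma\rangle=\langle\Lambda,m\square m\rangle$ for all $m\in\A^{**}$; by uniqueness of the associated symmetric form together with \eqref{standard}, it is in turn equivalent to $\langle m\lozenge n+n\lozenge m,\gamma\rangle=\langle\Lambda,m\square n+n\square m\rangle$ for all $m,n\in\A^{**}$. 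Thus the question becomes whether the $\lozenge$-quadratic form $m\mapsto\langle m\lozenge m,\gamma\rangle$ can factor linearly through the $\square$-quadratic form.

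Next I would squeeze out the available constraints from the two standard structural facts. Let $E\in\A^{**}$ be a weak$^*$ cluster point of the bounded approximate identity; then $m\square E=m$ and $E\lozenge m=m$ for every $m$, and the canonical copy of $\A$ lies in the topological centre, so the two Arens products agree as soon as one factor comes from $\A$. Feeding $m=a\in\A$ and $n=E$ into the polarized identity gives $\langle\Lambda-\widehat\gamma,2a\rangle=0$, that is $\Lambda-\widehat\gamma\in\A^{\perp}$; the same substitution with general $n$ shows that $\Lambda-\widehat\gamma$ annihilates every symmetric product $a\square n+n\square a$ with $a\in\A$. On the other hand, taking $m=n=m_0$ yields $\langle\Lambda-\widehat\gamma,\,m_0\square m_0\rangle=\langle m_0\lozenge m_0-m_0\square m_0,\gamma\rangle\neq0$ by the choice of $\gamma$. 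So a contradiction will follow once it is shown that $\Lambda-\widehat\gamma$ must in fact annihilate $m_0\square m_0$ as well.

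The remaining step is the genuinely delicate one, and I expect it to be the main obstacle. The natural device is to approximate $m_0$ in the weak$^*$ topology by a net $(a_\beta)\subseteq\A$ (Goldstine), and to transport the annihilation $\langle\Lambda-\widehat\gamma,\,a_\beta\square m_0+m_0\square a_\beta\rangle=0$ to the limit, using that $\square$ is separately weak$^*$-continuous in its first variable, $\lozenge$ in its second, that $a_\beta\square m_0\to m_0\square m_0$ and $m_0\square a_\beta=m_0\lozenge a_\beta\to m_0\lozenge m_0$, and that $\langle\cdot,\gamma\rangle$ is weak$^*$-continuous on $\A^{**}$; the orthogonality hypothesis is then meant to control the cross terms and collapse the identity to $\langle m_0\square m_0-m_0\lozenge m_0,\gamma\rangle=0$, the desired contradiction. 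The difficulty is precisely that $\Gamma'$, and hence $\Lambda$, lives in $\A^{***}$ and is \emph{not} weak$^*$-continuous, so one cannot simply pass $\Lambda$ through a weak$^*$ limit; the argument must be arranged so that $\Lambda$ is paired only with elements on which \emph{exact} algebraic identities hold, which is where the one-sided identities of $E$, the topological-centre relations, and the $\square$-orthogonality hypothesis must do the real work. Once this is established, the final assertion is immediate: $\overline{P_\gamma}$ is an orthogonally additive $2$-homogeneous polynomial on $\A^{**}$ that is not standard, so $\A^{**}$ fails the $2$-OA property.
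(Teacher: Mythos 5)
Your setup is correct and efficient: the orthogonal additivity of $\overline{P_\gamma}$ for every $\gamma$, the choice of $\gamma$ separating $m_0\square m_0$ from $m_0\lozenge m_0$, the reduction of "$\overline{P_\gamma}$ is standard" to the polarized identity $\langle m\lozenge n+n\lozenge m,\gamma\rangle=\langle\Lambda,m\square n+n\square m\rangle$, and the two consequences you extract (that $\Lambda-\widehat\gamma$ annihilates $\A$, via the mixed identity $E$, and that $\langle\Lambda-\widehat\gamma,m_0\square m_0\rangle\neq0$) are all sound and in fact subsume the paper's second and third cases. But the proof is not complete: the step you yourself label "the genuinely delicate one" is never carried out, and your proposed device for it fails for exactly the reason you identify. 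A bounded functional $\Lambda\in\A^{***}$ that agrees with $\widehat\gamma$ on the canonical copy of $\A$ is in no way determined on $m_0\square m_0$, because $\A$ is only weak$^*$-dense, not norm-dense, in $\A^{**}$; and since $\Lambda$ need not be weak$^*$-continuous, you cannot pass the annihilation relation $\langle\Lambda-\widehat\gamma,a_\beta\square m_0+m_0\square a_\beta\rangle=0$ through the weak$^*$ limit $a_\beta\to m_0$. So the contradiction is not reached.

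The missing idea — which is precisely how the paper closes this gap — is to first pin the representing functional down to $\A^*$ by comparing continuity properties of the two symmetric bilinear forms rather than only their values. By uniqueness of the symmetric form associated with a $2$-homogeneous polynomial, the form $\Psi(m,n)=\frac12\langle\eta,m\square n+n\square m\rangle$ coming from the standardness assumption must coincide with ${\overline\Phi}^s$; the Aron--Berner construction makes $m\mapsto{\overline\Phi}^s(m,x)$ weak$^*$-continuous for each $x\in\A$ (since $\gamma\in\A^*$ and $m\mapsto m\square x$, $m\mapsto x\lozenge m$ are weak$^*$-continuous), and the paper uses this to rule out $\eta\in\A^{***}\setminus\A^*$. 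Once $\eta$ (equivalently your $\Lambda$) is known to lie in $\A^*$, your relation "$\Lambda-\widehat\gamma$ annihilates $\A$" upgrades to $\Lambda=\gamma$ as elements of $\A^*$, weak$^*$-continuity lets you evaluate at $m_0\square m_0$, and your inequality $\langle\Lambda-\widehat\gamma,m_0\square m_0\rangle\neq0$ delivers the contradiction. Without some argument of this kind forcing weak$^*$-continuity of $\Lambda$, your chain of deductions cannot terminate.
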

\begin{proof}
By the hypothesis there exists  an element $m_0\in\A^{**}$ such that  $m_0\square m_0\neq m_0\Diamond m_0$.  We  choose a $\gamma\in \mathcal{A}^*$ such that $\langle m_0\square m_0, \gamma\rangle\neq \langle m_0\Diamond m_0,\gamma\rangle$. Let $P_\gamma:\A\longrightarrow\mathbb C$ be the standard $2-$homogeneous polynomial induced by $\gamma$, that is $P_\gamma(x)=\langle\gamma,x^2\rangle,\ (x\in\A)$,  whose associated symmetric bilinear form has the presentation  $\Phi(x,y)=\frac{1}{2}\langle\gamma,(xy+yx)\rangle\ (x,y \in \mathcal{A})$.  We naturally extend $\Phi$ to the Aron-Berner extension $\overline{\Phi}:(\A^{**})^2\longrightarrow\mathbb C$ which (from \eqref{ar}) can be represented by $\overline{\Phi}(m,n)=\frac{1}{2}\langle(m\square n+n\Diamond m),\gamma\rangle, (m,n\in\A^{**})$. Let ${\overline\Phi}^s:(\A^{**})^2\longrightarrow\mathbb C$ be the symmetrized form of $\overline\Phi$ which by \eqref{standard3} is given by  ${\overline\Phi}^s(m,n)=\frac{1}{2}\left(\overline{\Phi}(m,n)+\overline{\Phi}(n,m)\right), (m,n\in\A^{**})$.  Then the $2$-homogeneous polynomial $\overline{P_\gamma}:\mathcal{A}^{**}\rightarrow \mathbb{C}$ induced by ${\overline\Phi}^s$, that is 
     \begin{equation}\label{P1}
      \overline {P_\gamma}(m)={\overline\Phi}^s(m,m)=\overline\Phi(m,m)=\frac{1}{2}\langle m\square m+m\Diamond m,\gamma \rangle,
     \end{equation}
     is orthogonally additive. Indeed, if  $m\square n=0=n\square m$, then by the assumption  $m\Diamond n+n\Diamond m =0$, and so we have 
     \begin{align*}
     \overline{P_\gamma}(m+n)=\overline{\Phi}(m+n,m+n)&=\overline{P_\gamma}(m)+\overline{P_\gamma}(n)+\frac{1}{2}\langle m\Diamond n+n\Diamond m,\gamma\rangle\\
     &=\overline{P_\gamma}(m)+\overline{P_\gamma}(n).
     \end{align*}
    We claim that $\overline{P_\gamma}$ is not in the standard form. Suppose in the contrary that there exists a $\eta\in A^{***}$ such that, 
    \begin{equation}\label{P2}
   \overline{P_\gamma}(m)=\langle\eta, m\square m\rangle,\quad (m\in\A^{**}).
    \end{equation}
     Then by \eqref{standard} its associated  unique symmetric bilinear form must be of the form  $\Psi(m,n)=\frac{1}{2}\langle\eta, m\square n + n\square m\rangle$, for each $m,n\in\A^{**}$. But this can not be happen when $\eta\in \mathcal{A}^{***}\setminus \mathcal{A}^*$; since in this case,  the map $\cdot\rightarrow \Psi(\cdot,x)$ is  not weak$^*$-continuous when $x\in \mathcal{A}$, whereas the map $\cdot\rightarrow {\overline\Phi}^s(\cdot,x)$ is weak$^*$-continuous. Therefore we should assume that $\eta\in A^*.$ In this case, if $\eta=\gamma$, then using  \eqref{P1} and \eqref{P2} for $m=m_0$  we  get $\langle m_0\square m_0, \gamma\rangle=\langle m_0\Diamond m_0,\gamma\rangle,$ which contradicts the choice of $\gamma$. The only possible case is $\eta\neq\gamma.$ Take an element $x\in\A$ with $\langle\eta,x\rangle\neq\langle\gamma,x\rangle$ and let   $E$ be the mixed identity of $\A^{**}$  induced by the bounded approximate identity of $\mathcal{A}$. Then we arrive at  ${\overline\Phi}^s(x,E)=\langle\gamma,x\rangle\neq\langle\eta, x\rangle=\Psi(x,E)$, which means that  ${\overline\Phi}^s$ is not equal $\Psi$. Thus $\overline{P_\gamma}$ is not in the standard form and this completes the proof.
 \end{proof}

\section{Homogeneous polynomials on the bidual of dual Banach algebras}
Here we focus on dual Banach algebras. We recall that  a Banach algebra $\A$  is a dual Banach algebra if $A=F^*$  for some Banach space $F$ and $F$ is a   $\A-$submodule of $\A^*$ (however, we do not use the latter property of $F$ here). Then   the bidual $\mathcal{A}^{**}$ has the presentation  $\mathcal{A}^{**}=\mathcal{A} \oplus F^{\perp}$ where $F^{\perp}=\{m\in\A^{**}; m  \ \hbox{is zero on}\ F\}$ is a closed  ideal of $\A^{**}$. Indeed the bidual $(\A^{**},\Box)$ can be identified with the semidirect product Banach algebra $\mathcal{A}\ltimes F^{\perp}$ equipped with the product $(a,m)(b,n)=(ab, an+mb+m\Box n)$ for every $a,b\in \A, m,n\in F^\perp.$  Furthermore, if $\pi$ denotes the adjoint of the canonical embedding of $F$ to $\A^*$, then a direct verification reveals that the map $\pi:\A^{**}\longrightarrow A$ is a  bounded homomorphism onto $\A.$
\medskip

For a dual Banach algebra $\A$, beside the (Aron--Berner) extension $\overline P$  of a $k-$homogeneous polynomial $P:\A\longrightarrow\mathbb C,$ the map   $P\circ \pi:\A^{**}\longrightarrow\mathbb C$  is also a $k-$homogeneous polynomial extending  $P$. Indeed, if $P$ is induced by the $k-$linear form $\Phi:\A^k\longrightarrow\mathbb C,$ then the $k-$linear form associated with $P\circ\pi$ is $\Phi\circ\pi^k:(\A^{**})^k\longrightarrow\mathbb C,$ where $\pi^k: (\A^{**})^k\longrightarrow\A^k$ is the $k-$fold of  the homomorphism $\pi:\A^{**}\longrightarrow\A$. However, it is known (see  \cite{LM}) that the  extensions $\overline P$ and $P\circ\pi$ are not coincide, in general. To improve the results of \cite{LM}, in the next result we investigate the coincidence of these two extensions for  certain  $k-$homogeneous polynomials.
\begin{proposition}\label{equality}
Let $\mathcal{A}$ be a  non-reflexive dual Banach algebra with the predual $F$ and let $\gamma\in\A^*.$ 
\begin{itemize}
\item[i)] If $\A$ is either Arens regular or commutative,  then  $P_\gamma\circ \pi=\overline{P_\gamma}+ Q,$   where $Q$ is a standard $k-$homogeneous polynomial on $\mathcal{A}^{**}.$
  \item[ii)] If $\gamma\in F$, then $P_\gamma\circ\pi=\overline{P_\gamma}$. The converse is also hold in the case where $\A$ is unital. 
\end{itemize}
\end{proposition}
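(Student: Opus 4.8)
The plan is to regard both $P_\gamma\circ\pi$ and $\overline{P_\gamma}$ as genuine $k$-homogeneous polynomials on $\A^{**}$ and to compare the functionals that generate them. For part (i), the key observation is that $\pi:\A^{**}\to\A$ is a bounded homomorphism, so $\pi(m^k)=\pi(m)^k$ for the first Arens power and therefore $P_\gamma\circ\pi(m)=\langle\gamma,\pi(m)^k\rangle=\langle\pi^*\gamma,m^k\rangle$; that is, $P_\gamma\circ\pi$ is itself standard on $\A^{**}$, induced by $\pi^*\gamma\in\A^{***}$. When $\A$ is Arens regular or commutative, Proposition \ref{aa} gives $\overline{P_\gamma}(m)=\langle m^k,\gamma\rangle=\langle\widehat\gamma,m^k\rangle$, where $\widehat\gamma\in\A^{***}$ is the canonical image of $\gamma$. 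Setting $Q:=P_\gamma\circ\pi-\overline{P_\gamma}$ and subtracting then yields $Q(m)=\langle\pi^*\gamma-\widehat\gamma,m^k\rangle$, a standard $k$-homogeneous polynomial induced by $\pi^*\gamma-\widehat\gamma$, which is exactly the assertion of (i).

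The computation underlying (ii) is the identity $\langle\xi,\gamma\rangle=\langle\gamma,\pi(\xi)\rangle$ valid for $\gamma\in F$ and every $\xi\in\A^{**}$, equivalently $\pi^*\gamma=\widehat\gamma$; this is immediate from $\pi$ being the adjoint of the inclusion $F\hookrightarrow\A^*$, and it already forces $Q=0$ in the setting of (i). To obtain $P_\gamma\circ\pi=\overline{P_\gamma}$ for an \emph{arbitrary} dual Banach algebra, where $\overline{P_\gamma}$ need not be standard, I would evaluate the Aron--Berner iterated limits \eqref{ar} directly rather than invoke Proposition \ref{aa}. Since $\pi$ is weak$^*$-to-weak$^*$ continuous and restricts to the identity on $\A$, any net $(a_\alpha)\subseteq\A$ with $a_\alpha\to m$ in $\sigma(\A^{**},\A^*)$ also satisfies $a_\alpha\to\pi(m)$ in $\sigma(\A,F)$. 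Because the product of a dual Banach algebra is separately $\sigma(\A,F)$-continuous and $\gamma\in F$, every monomial form $\Phi_\sigma(x_1,\dots,x_k)=\langle\gamma,x_{\sigma(1)}\cdots x_{\sigma(k)}\rangle$ is separately weak$^*$-continuous; I can therefore resolve the limits one variable at a time, each step replacing the argument $m_i$ by $\pi(m_i)$, until $\overline{\Phi_\sigma}$ collapses to $\Phi_\sigma\circ\pi^k$. Summing over $\sigma$ gives $\overline{\Phi}=\Phi\circ\pi^k$, hence $\overline{P_\gamma}=P_\gamma\circ\pi$.

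For the converse in (ii) I assume $\A$ unital with unit $e$, so that $e$ is the identity of $(\A^{**},\Box)$, and that the two extensions agree. By polarization this is the coincidence of the associated symmetric forms, $\Phi\circ\pi^k=\overline{\Phi}^{\,s}$, which I would test on the tuple $(n,e,\dots,e)$ with $n\in F^\perp$. The left-hand side vanishes because $\pi(n)=0$. On the right-hand side, since $e$ is a unit each monomial collapses to $\overline{\Phi_\sigma}(e,\dots,n,\dots,e)=\langle n,\gamma\rangle$, independently of $\sigma$ and of the slot occupied by $n$, so the symmetrization \eqref{standard3} returns $\overline{\Phi}^{\,s}(n,e,\dots,e)=\langle n,\gamma\rangle$. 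Thus $\langle n,\gamma\rangle=0$ for every $n\in F^\perp$; as $F={}^{\perp}(F^{\perp})$ (a norm-closed subspace being weakly closed), this gives $\gamma\in F$.

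I expect the genuine obstacle to be the forward direction of (ii). For an algebra that is neither Arens regular nor commutative, $\overline{P_\gamma}$ really does mix the two Arens products and fails to be standard, so equality with $P_\gamma\circ\pi$ cannot be read off by the purely algebraic manipulation of (i). The delicate point is the justification for passing each successive weak$^*$-limit through the functional: this works precisely because the intermediate functionals $x\mapsto\langle\gamma,uxv\rangle$ ($u,v\in\A$) remain $\sigma(\A,F)$-continuous, i.e.\ lie in $F$, which is exactly the submodule/predual structure of a dual Banach algebra. Controlling the order of the limits and this continuity bookkeeping, rather than any single sharp estimate, is where the real work lies.
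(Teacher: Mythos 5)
Your argument is correct and follows essentially the same route as the paper: part (i) by recognizing both extensions as standard polynomials (induced by $\pi^*\gamma$ and by the canonical image of $\gamma$, respectively) and subtracting; the forward half of (ii) by collapsing the Aron--Berner iterated limits through $\pi$ when $\gamma\in F$; and the converse by testing the identity $\Phi\circ\pi^k={\overline{\Phi}}^{s}$ with the unit occupying all but one slot to conclude $\langle n,\gamma\rangle=0$ for every $n\in F^{\perp}$. The only cosmetic difference is in the converse, where the paper reaches the same conclusion via weak$^*$-continuity of $\gamma\circ\pi$ and a net converging to $1+n$ rather than by your direct evaluation of the two symmetric forms at $(n,e,\dots,e)$.
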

\begin{proof}
 To prove i), we define $\eta:\A^{**}\longrightarrow\mathbb C$  by $\langle \eta, m\rangle=\langle \gamma, \pi(m)-m\rangle$,\  ($m\in \mathcal{A}^{**}$). By  Proposition \ref{aa}, since for each $m\in\A^{**},$ $\overline{P_\gamma}(m)=\langle m^k,\gamma\rangle,$  we get 
	\[\langle \eta, m^k\rangle=\langle \gamma, \pi(m^k)-m^k\rangle=(P_\gamma\circ\pi)(m)-\overline{P_\gamma}(m).\]
	Therefore  $P\circ\pi(m)=\overline{P}(m)+Q(m)$, where $Q(m)=\langle \eta,m^k\rangle,$ for each $m\in\A^{**}.$\medskip

To prove ii), first suppose that $\gamma\in F,$ since  for each $m,n\in \mathcal{A}^{**}$, $m\Diamond n$ and $m\square n$ are equal on $F$ we have $\overline{P_\gamma}(m)=\langle\gamma, m^k\rangle$, for each $m\in\A^{**}$. The assumption $\gamma\in F$ also implies that $\eta=0$ and   the  latter argument confirms that $P_\gamma\circ\pi=\overline{P_\gamma}.$\smallskip

For the converse suppose that $P_\gamma\circ\pi=\overline{P_\gamma}$ and let $1$ be  the identity of $\A$. Let $\Phi$ be the symmetric $k-$linear form  associated with $P_\gamma$ as given in \eqref{standard}. Then  $\Phi\circ\pi^k$ and ${\overline \Phi}^s$ are the symmetric $k-$linear form associated with $P_\gamma\circ\pi$ and $\overline{P_\gamma},$ respectively, and so $\Phi\circ\pi^k={\overline \Phi}^s$. It follows that $\Phi\circ\pi^k$ is weak$^*-$continuous in its first variable when the other variables  fall in $\mathcal{A}$. On the other hand, by \eqref{standard}, $\Phi\circ\pi^k$ has the expression   
$$(\Phi\circ\pi^k)(m_1,\ldots,m_k)=\frac{1}{k!}\sum_{\sigma\in S_k}\langle\gamma, \pi(m_{\sigma(1)})\ldots \pi(m_{\sigma(k)})\rangle\quad (m_1,\ldots,m_k\in\A^{**}),$$
form which  we get that $(\Phi\circ\pi^k)(m,1,\ldots,1)=\langle\gamma\circ \pi,m\rangle,$ for each $m\in\A^{**}.$ In particular, the functional 
$m\mapsto\langle\gamma\circ \pi,m\rangle:\A^{**}\longrightarrow\mathbb C$  is weak$^*-$continuous.  Now, to prove $\gamma\in F$, it suffices to show that 
$\langle n, \gamma\rangle=0$ for each $n\in F^{\perp}$.  To this end,  let $n\in F^{\perp}$ and  set $m:=1+n\in \mathcal{A}\oplus F^{\perp}=\mathcal{A}^{**}$. Take a net  $(x_{\alpha})$ in $\mathcal{A}$, weak$^*-$converging to $m$. Then by the weak$^*-$continuity of $\gamma\circ P$, we have:
 \begin{align*}
   \langle m,\gamma\rangle=\lim_{\alpha}\langle x_{\alpha},\gamma\rangle=\lim_{\alpha}\langle\pi(x_{\alpha}),\gamma\rangle=
   =\lim_{\alpha}\langle\gamma\circ \pi , x_{\alpha}\rangle=\langle\gamma\circ \pi,m\rangle
   =\langle\gamma,\pi(m)\rangle=\langle\gamma,1\rangle,
 \end{align*}
 which implies that  $\langle n , \gamma\rangle=0,$ as claimed. 
	\end{proof}

The next result confirms that a dual Banach algebra $\A$ inherits $k-$OA property from its bidual $\A^{**}.$
\begin{theorem}\label{inherits}
For a dual Banach algebra  $\mathcal{A},$ if $\A^{**}$ has $k-$OA property then $\A$ has $k-$OA property.
\end{theorem}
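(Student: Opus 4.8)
The plan is to pull an orthogonally additive $k$-homogeneous polynomial on $\A$ back to $\A^{**}$ along the homomorphism $\pi$, apply the assumed $k$-OA property of $\A^{**}$ there, and then push the resulting standard representation back down to $\A$, exploiting that $\pi$ restricts to the identity on $\A$.

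First, I would take an orthogonally additive $k$-homogeneous polynomial $P:\A\to\mathbb C$, induced by its symmetric $k$-linear form $\Phi$, and form the composition $P\circ\pi:\A^{**}\to\mathbb C$. As recalled in the paragraph preceding Proposition \ref{equality}, this is again a $k$-homogeneous polynomial, with associated symmetric $k$-linear form $\Phi\circ\pi^k$.

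The crucial step is to check that $P\circ\pi$ is orthogonally additive on $(\A^{**},\square)$, and this is precisely where the homomorphism property of $\pi$ is used. If $m,n\in\A^{**}$ satisfy $m\square n=0=n\square m$, then $\pi(m)\pi(n)=\pi(m\square n)=0$ and $\pi(n)\pi(m)=\pi(n\square m)=0$, so $\pi(m)$ and $\pi(n)$ are orthogonal in $\A$; linearity of $\pi$ together with orthogonal additivity of $P$ then gives $(P\circ\pi)(m+n)=P(\pi(m)+\pi(n))=P(\pi(m))+P(\pi(n))=(P\circ\pi)(m)+(P\circ\pi)(n)$.

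Finally, I would invoke the $k$-OA property of $\A^{**}$ to obtain $\eta\in\A^{***}$ with $(P\circ\pi)(m)=\langle\eta,m^k\rangle$ for every $m\in\A^{**}$, the power being taken with respect to $\square$, and then restrict to $\A\subseteq\A^{**}$. Since $\pi$ is the identity on $\A$ and the canonical embedding $\A\hookrightarrow(\A^{**},\square)$ is an algebra homomorphism, for $x\in\A$ we have $(P\circ\pi)(x)=P(x)$ while the $k$-th power of $x$ computed in $\A^{**}$ is the image of $x^k$ computed in $\A$. Setting $\gamma:=\eta|_\A\in\A^*$ then yields $P(x)=\langle\eta,x^k\rangle=\langle\gamma,x^k\rangle$, so $P=P_\gamma$ is standard and $\A$ has the $k$-OA property. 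The only genuinely delicate point, and the one I would verify most carefully, is the orthogonality-transfer step: it hinges on $\pi$ being a true algebra homomorphism rather than merely bounded linear, and it is what makes the whole descent argument go through; the remaining manipulations are formal, resting on $\pi|_\A=\mathrm{id}_\A$ and on $\gamma=\eta|_\A$ genuinely lying in $\A^*$.
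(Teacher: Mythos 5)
Your proposal is correct and follows exactly the paper's argument: compose $P$ with the projection homomorphism $\pi:\A^{**}\to\A$, observe that $P\circ\pi$ is orthogonally additive, apply the $k$-OA property of $\A^{**}$ to get $\eta\in\A^{***}$, and restrict to $\gamma=\eta|_{\A}$. You actually spell out the orthogonality-transfer step (via $\pi$ being an algebra homomorphism) more explicitly than the paper does, which is a welcome addition rather than a deviation.
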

\begin{proof}
Let  $P:\A\longrightarrow\mathbb C$ be an orthogonally additive  $k-$homogeneous polynomial induced by the $k-$linear form $\Phi:\A^k\longrightarrow\mathbb C.$ Then, as discussed just before Proposition \ref{equality},  $P\circ \pi:\A^{**}\longrightarrow\mathbb C$, which induced by the $k-$linear form  $\Phi\circ\pi^k:(\A^{**})^k\longrightarrow\mathbb C,$ is  orthogonally additive on $\A^{**}$.  Since $\A^{**}$ enjoys $k-$OA property,  there exists a $\eta\in \mathcal{A}^{***}$ such that  $(P\circ \pi)(m)=\langle\eta, m^{k}\rangle$, for each $m\in\mathcal{A}^{**}$, and this follows that  $P(x)=(P\circ\pi)(x)=\langle\gamma, x^{k}\rangle$, for each  $x\in \mathcal{A}$, where $\gamma=\eta_{|_\A}$. Thus $\A$ has  $k-$OA property, as claimed.
\end{proof}

To achieve a converse to Theorem \ref{inherits}, in  the following result we provide  some conditions under which $\A^{**}$ enjoys $k-$OA property whenever $\A$ has the same property. 
\begin{theorem}\label{aks}
Let $\mathcal{A}$ be a  dual Banach algebra with a predual $F$ such that   $aF^{\perp}=F^{\perp}a=0$ for each $a\in \mathcal{A}$ and $m\Box m=0$ for each $m\in F^\perp$. If  $\mathcal{A}$ has $k-$OA property, then  $\mathcal{A}^{**}$ has $k-$OA property.
\end{theorem}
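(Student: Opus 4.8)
The plan is to exploit the two algebraic hypotheses to split an arbitrary orthogonally additive $k$-homogeneous polynomial on $\A^{**}$ according to the decomposition $\A^{**}=\A\oplus F^{\perp}$, and then to kill the $F^{\perp}$-part. Recall that $(\A^{**},\square)$ is the semidirect product $\A\ltimes F^{\perp}$ with $(a,m)(b,n)=(ab,\,an+mb+m\square n)$; under the hypothesis $aF^{\perp}=F^{\perp}a=0$ this reduces to $(a,m)(b,n)=(ab,\,m\square n)$. The first thing I would record is that the canonical copy of $\A$ is totally orthogonal to $F^{\perp}$ in $\A^{**}$: for all $a\in\A$ and $m\in F^{\perp}$ one has $(a,0)(0,m)=(0,0)=(0,m)(a,0)$, so $(a,0)$ and $(0,m)$ are always orthogonal. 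Likewise two elements $(a,0),(b,0)$ are orthogonal in $\A^{**}$ exactly when $ab=ba=0$.

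Now let $P\colon\A^{**}\to\mathbb C$ be orthogonally additive and $k$-homogeneous. Writing a generic $M\in\A^{**}$ as $M=(\pi(M),0)+(0,m)$ with $m=M-(\pi(M),0)\in F^{\perp}$, total orthogonality together with orthogonal additivity gives $P(M)=P\big((\pi(M),0)\big)+P\big((0,m)\big)$. The restriction $a\mapsto P((a,0))$ is a $k$-homogeneous polynomial on $\A$ which is orthogonally additive, because orthogonal $a,b\in\A$ yield orthogonal $(a,0),(b,0)$; hence the $k$-OA property of $\A$ furnishes some $\gamma\in\A^{*}$ with $P((a,0))=\langle\gamma,a^{k}\rangle$ for all $a\in\A$.

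The crux is to show that the restriction $R:=P|_{F^{\perp}}$ vanishes, and here the square-zero hypothesis does the work. For any $m\in F^{\perp}$ we have $m\square m=0$, so $(0,m)$ is orthogonal to itself; orthogonal additivity then gives $R(2m)=R(m)+R(m)=2R(m)$, while $k$-homogeneity gives $R(2m)=2^{k}R(m)$. Since $k\ge 2$, comparing these forces $(2^{k}-2)R(m)=0$, that is, $R\equiv 0$. Consequently $P(M)=P((\pi(M),0))=\langle\gamma,\pi(M)^{k}\rangle=\langle\gamma,\pi(M^{k})\rangle=\langle\gamma\circ\pi,M^{k}\rangle$, the middle equality using that $\pi$ is a homomorphism. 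Putting $\eta:=\gamma\circ\pi\in\A^{***}$ exhibits $P$ in the standard form $P(M)=\langle\eta,M^{k}\rangle$, so $\A^{**}$ has the $k$-OA property. I anticipate no real obstacle beyond the bookkeeping of reducing the product via the two hypotheses; the one point that merits care is confirming that the definition of orthogonality legitimately allows the choice $x=y=m$ in the self-orthogonality step, which it does precisely because $m\square m=0$.
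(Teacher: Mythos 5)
Your proof is correct and follows essentially the same route as the paper's: decompose $\A^{**}=\A\ltimes F^{\perp}$, use the mutual orthogonality of $\A$ and $F^{\perp}$ to split $P$, apply the $k$-OA property of $\A$ to the first summand, and kill the $F^{\perp}$-part via the self-orthogonality argument comparing $2^{k}R(m)$ with $2R(m)$. Your explicit identification of the representing functional as $\eta=\gamma\circ\pi$ is a small extra detail the paper leaves implicit, but the argument is the same.
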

\begin{proof}
It is known that    $(\mathcal{A}^{**},\square)$ can be identified with the semidirect product Banach $\mathcal{A}\ltimes F^{\perp}$ equipped with the product $(a,m)(b,n)=(ab, an+mb+m\Box n)\ (a,b\in\A, m,n\in F^\perp).$  Let $P$ be  an orthogonal additive $k-$homogeneous polynomial on $\A^{**}=\mathcal{A}\ltimes F^{\perp}$. Since  $(a,0)(0,m)=0=(0,m)(a,0)$, for each $(a,m)\in \mathcal{A}\ltimes F^{\perp}$, we get $P(a,m)=P(a,0)+P(0,m)$. So $P$ can be written as $P(a,m)=P_{1}(a)+P_{2}(m)$ where $P_{1}(a)=P(a,0)$ and $P_{2}(m)=P(0,m)$. Then  $P_{1}$ is an orthogonal additive $k$-homogeneous polynomials on $\mathcal{A}$. So there exists a $\gamma\in \mathcal{A}^{*}$  such that $P_1(a)=\langle\gamma, a^2\rangle$ for each $a\in\A$. On the other hand, since $P$ is an orthogonally additive $k-$homogeneous polynomial and $m\Box m=0$ \ $(m\in F^\perp)$, we get \[2^kP_2(m)=2^kP(0,m)=P(2(0,m))=P((0,m)+(0,m))=P((0,m))+P((0,m))=2P_2(m),\]
for each $m\in F^\perp,$ and this implies that  $P_2(m)=0$ for each $m\in F^\perp$. We thus get that $P$ can be expressed in the standard form, and so  $\mathcal{A}^{**}$ has $k-$OA property, as claimed.
\end{proof}
As an application of Theorem \ref{aks}, we bring the following corollary illustrating that the bidual  $({\ell^1})^{**}$ of the Banach algebra $\ell^1=\ell^1(\mathbb N)$, equipped with the pointwise product,   has $k-$OA property. It is worth to mention that, since $\ell^\infty$ can be identified with  $C(\beta\mathbb N)$, we get $(\ell^1)^{**}\cong M(\beta\mathbb N),$ the measure algebra on the  Stone-\v{C}ech compactification of $\mathbb N,$ (see \cite{D}).
\begin{corollary}\label{l}
  $(\ell^1)^{**}$ has $k-$OA property.
\end{corollary}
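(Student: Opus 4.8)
The plan is to deduce Corollary \ref{l} as a direct application of Theorem \ref{aks}, so the entire task reduces to verifying that $\A = \ell^1 = \ell^1(\mathbb N)$ with pointwise product satisfies all the hypotheses of that theorem. There are three hypotheses to check: that $\ell^1$ is a dual Banach algebra with some predual $F$; that $aF^\perp = F^\perp a = 0$ for every $a\in\ell^1$ and that $m\Box m = 0$ for every $m\in F^\perp$; and that $\ell^1$ itself has the $k$-OA property. Once these are in place, Theorem \ref{aks} immediately yields that $(\ell^1)^{**}$ has $k$-OA property.

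First I would identify the predual. Since $\ell^1 = c_0^*$ and $c_0$ is a (closed) $\ell^1$-submodule of $(\ell^1)^* = \ell^\infty$ under the pointwise module actions, $\ell^1$ is a dual Banach algebra with predual $F = c_0$. Consequently $(\ell^1)^{**} = \ell^\infty{}^* \cong M(\beta\mathbb N)$, and the decomposition $(\ell^1)^{**} = \ell^1 \oplus c_0^\perp$ holds, where $c_0^\perp$ consists of those functionals on $\ell^\infty$ that annihilate $c_0$. Next I would verify the algebraic annihilation conditions. The key structural fact is that $c_0^\perp$ is precisely the ideal of functionals supported "at infinity" on $\beta\mathbb N\setminus\mathbb N$; concretely, for $a\in\ell^1 = c_0 \subseteq \ell^\infty$ acting on the left or right, the point masses $e_n$ are dense in $\ell^1$ and each $e_n$ lies in $c_0$, so $a$ acts through $c_0$. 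The product $am$ for $m\in c_0^\perp$ pairs, against any $\lambda\in\ell^\infty$, as $m$ evaluated at $a\lambda$; since $a\in c_0$ forces $a\lambda\in c_0$ and $m$ annihilates $c_0$, we obtain $am = 0$, and by the pointwise (hence commutative) product also $ma = 0$. This gives $a\,c_0^\perp = c_0^\perp a = 0$. Finally, for $m\Box m = 0$ on $c_0^\perp$, I would use commutativity together with the product formula on $M(\beta\mathbb N)$: the pointwise product on $\ell^1$ extends so that the $\Box$-product of two elements of $c_0^\perp$, tested against $\lambda\in\ell^\infty = C(\beta\mathbb N)$, is controlled by the failure of multiplicativity that is forced to vanish on measures concentrated on the growth $\beta\mathbb N\setminus\mathbb N$; I expect $m\Box m$ to vanish because pointwise multiplication of two bounded sequences restricted to $c_0$ produces the obstruction, and $m$ kills $c_0$.

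The last hypothesis, that $\ell^1$ has $k$-OA property, I would take from the existing literature on commutative Banach function algebras, invoking \cite{V} (or the Banach-lattice result \cite{BLL}), since $\ell^1$ with pointwise product is a commutative semisimple Banach function algebra; alternatively one checks directly that an orthogonally additive $k$-homogeneous polynomial on $\ell^1$ is determined on the disjointly supported basis $(e_n)$ and hence is standard.

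I expect the main obstacle to be the verification that $m\Box m = 0$ for all $m\in c_0^\perp$, rather than merely $m\Box m \in c_0^\perp$. One must pin down the first Arens product on $(\ell^1)^{**}$ explicitly enough to see that the self-product of a functional annihilating $c_0$ vanishes identically, not just on $c_0$. The cleanest route is to argue via the semidirect decomposition: realize $c_0^\perp$ as the annihilator ideal, use that the pointwise product sends any $\ell^\infty$-function times a $c_0$-function back into $c_0$, and show that the bilinear extension defining $\Box$ on $c_0^\perp \times c_0^\perp$ factors through a map into $c_0$, which $m$ then annihilates. Carrying out this factorization carefully — tracking the iterated weak$^*$-limits in \eqref{ar} through the pointwise multiplication — is the technical heart of the argument; everything else is routine module bookkeeping.
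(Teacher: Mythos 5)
Your proposal follows essentially the same route as the paper: reduce everything to Theorem \ref{aks} by checking that $\ell^1$ is a dual Banach algebra with predual $c_0$, that $a\,c_0^\perp=c_0^\perp a=0$ and $m\Box m=0$ on $c_0^\perp$ (the paper cites Dales, Example 2.6.22(iii), for these; your sketch via $\lambda a\in c_0$ is the correct and in fact immediate argument, since already $n\cdot\lambda=0$ for $n\in c_0^\perp$), and that $\ell^1$ itself has the $k$-OA property (which the paper attributes to \cite{ILL} but also proves directly by constructing $\gamma$ from the values $\Phi(\cdot,e_i,\dots,e_i)$). The only difference is that you outsource this last step to the literature rather than reproving it, which is legitimate since the paper itself acknowledges \cite{ILL} as a source.
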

  \begin{proof}
  First  we recall that $\ell^1$, with pointwise product,  is a semisimple commutative, Arens regular dual Banach algebra with predual $c_0$. We further recall that $ac_0^\perp=0=c_0^\perp a$ for each $a\in\ell^1,$ and that $m\Box n=0$ for every $m,n\in c_0^\perp,$ (see {\cite[Example 2.6.22(iii)]{D}}). In view of Theorem \ref{aks}, it remains to show that $\ell^1$ has $k-$OA property. This  fact has been proved in \cite{ILL}, however, we  provide a short direct proof here.

Let $P:\ell^1\rightarrow \mathbb{C}$ be an orthogonally additive $k$-homogeneous polynomial and $\Phi:(\ell^1)^k\rightarrow\mathbb{C}$ be its associated  symmetric $k-$linear form.  Orthogonal additivity of $P$ implies that  if  $xy=0=yx$ for some  $x,y\in\ell^1$, then
  $\Phi(x,\underset{i}{\cdots},x,y,\underset{k-i}{\cdots},y)=0$ for each $i=1,\cdots,k-1$; (see for example, {\cite[Proposition 2.2]{PV}}).
   We  define  $\gamma:\ell^1\rightarrow \mathbb{C}$ by
  \[\langle\gamma, x\rangle=\sum_{i=1}^{\infty}\Phi(x,e_i,\cdots,e_i)\quad (x\in\ell^1), \]
   where for each $i\in\mathbb N$, $e_i\in\ell^1$ has $1$ in its $i$th component and $0$ in  the other components.
  Then  $\gamma\in\ell^\infty$. Indeed, for each
  $x=(x_i)\in\ell^1$ and $i\in\mathbb N$, if we set  $\pi_i(x)=x_ie_i$ and $\tau_i(x)=x-\pi_i(x)$, then $\tau_i(x)e_i=0=e_i\tau_i(x)$ and so  
   \begin{align*}
  \sum_{i=1}^{\infty}|\Phi(x,e_i,\cdots,e_i)|=\sum_{i=1}^{\infty}|\Phi(\pi_i(x)+\tau_i(x),e_i,\cdots,e_i)|
  &=\sum_{i=1}^{\infty}|\Phi(\pi_i(x),e_i,\cdots,e_i)|\\
  &=\sum_{i=1}^{\infty}|x_i||\Phi(e_i,e_i,\cdots,e_i)|\leq \|\Phi\|\|x\|.
  \end{align*}
  We now claim that $P$ is standard by showing that $P=P_\gamma.$ To this end, let $x=(x_i)=\sum_{i=1}^{\infty}\pi_i(x)\in\ell^1$, then we have
   \begin{align*}
 P(x)&=\sum_{i=1}^{\infty}\Phi\big(x\cdots,x,\pi_i(x)\big)=\sum_{i=1}^{\infty}\Phi\big(\pi_i(x)+\tau_i(x),\cdots,\pi_i(x)+\tau_i(x),\pi_i(x)\big)\\
  &=\sum_{i=1}^{\infty}\sum_{j=1}^{k-1}\binom{k-1}{j}\Phi\big(\pi_i(x)\underset{j}{\cdots},\pi_i(x),\tau_i(x),\underset{k-1-j}{\cdots},\tau_i(x),\pi_i(x)\big)\\
  &=\sum_{i=1}^{\infty}\Phi\big(\pi_i(x),\cdots,\pi_i(x)\big)\tag {since $\pi_i(x)\tau_i(x)=\tau_i(x)\pi_i(x)=0$}\\
  &=\sum_{i=1}^{\infty}\Phi(x_i^ke_i,\cdots,e_i)
  =\sum_{i=1}^{\infty}\Phi\big(\sum_{j=1}^{\infty}x_j^ke_j,\cdots,e_i\big)=\sum_{i=1}^{\infty}\Phi(x^k,e_i,\cdots,e_i)=\langle\gamma,x^k\rangle.
  \end{align*}
  This completes the proof.
  \end{proof}

  We close  the paper by posing the following  question which, to the best of our knowledge, seems to
be open.

{\bf Question}: Let $\A$ be a Banach algebra with $k-$OA property which is either commutative or Arens regular. Does $\A^{**}$ have $k-$OA property?

\bibliographystyle{amsplain}

\end{document}